\theoremstyle{definition}
\newtheorem{Def}{Definition}[section]
\newtheorem{Thm}[Def]{Theorem}
\newtheorem{Rem}[Def]{Remark}
\newtheorem{Cor}[Def]{Corollary}
\newtheorem{Prob}[Def]{Problem}
\newtheorem{Lem}[Def]{Lemma}
\numberwithin{equation}{section}
\begin{document}
\title{Ramanujan type congruences for the Klingen-Eisenstein series}
\author{Toshiyuki Kikuta and Sho Takemori}
\maketitle

\noindent
{\bf 2010 Mathematics subject classification}: Primary 11F33 $\cdot$ Secondary 11F46\\
\noindent
{\bf Key words}: Congruences for modular forms, Klingen-Eisenstein series, Cusp forms, Ramanujan

\begin{abstract}
In the case of Siegel modular forms of degree $n$, we prove that, for almost all prime ideals $\frak{p}$ in any ring of algebraic integers, mod $\frak{p}^m$ cusp forms are congruent to true cusp forms of the same weight. As an application of this property, we give congruences for the Klingen-Eisenstein series and cusp forms, which can be regarded as a generalization of Ramanujan's congruence. We will conclude by giving numerical examples.
\end{abstract}

\section{Introduction}
Kurokawa \cite{Kuro1} found some examples of congruence relations on eigenvalues between the Klingen-Eisenstein series and Hecke eigen cusp forms, in the case of Siegel modular forms of degree $2$. Mizumoto \cite{Mizu4} and Katsurada-Mizumoto \cite{Kat-Miz} showed some congruence properties of this kind for more general cases. In this paper, we prove congruences on \textit{Fourier} \textit{coefficients} between the Klingen-Eisenstein series and cusp forms, in the case of Siegel modular forms of degree $n$. We remark that congruences on Fourier coefficients are stronger properties than congruences on eigenvalues of eigen forms.

In order to show these congruences, we determine all mod $\frak{p}^m$ cusp forms which are congruent to true cusp forms, where ``mod $\frak{p}^m$ cusp forms'' are Siegel modular forms  of degree $n$ whose Fourier coefficients of rank $r$ with $0\le r\le n-1$ vanish modulo $\frak{p}^m$ (see Definition \ref{cusp}). Namely, we can explain our main results as follows: \\
(1) In the case of Siegel modular forms of degree $n$, for almost all prime ideals $\frak{p}$ in any ring of algebraic integers, mod $\frak{p}^m$ cusp forms are congruent to true cusp forms of the same weight (Theorem \ref{Lem3}). \\
(2) We take a prime ideal $\frak{p}$ such that a constant multiple of the Klingen-Eisenstein series $\alpha [f]_r^n$ attached to a Hecke eigen cusp form $f$ is a mod $\frak{p}^m$ cusp form. Then there exists a cusp form $F$ such that $\alpha [f]_r^n\equiv F$ mod $\frak{p}^m$ (Corollary \ref{ThmM}).

The congruences we prove can be regarded as a generalization of Ramanujan's congruence which asserts that
\begin{align*}
\sigma _{11}(n)\equiv \tau (n) \bmod{691},
\end{align*}
where $\sigma _m(n)$ is the $n$-th Fourier coefficient of the Eisenstein series of weight $12$ (i.e., the sum of $m$-th powers of the divisors of $n$)
and $\tau (n)$ is the $n$-th Fourier coefficient of Ramanujan's
$\Delta $ function. In the case of degree $2$ and of $f=1$ for the situation (2), we already proved these congruences in \cite{Ki-Na}.

%%%%%%%%%%%%%%%%%%%%%%%%%%%%%%%%%%%%%%%%%%%%%%%%%
\section{Preliminaries}
\label{Pre}
%%%%%%%%%
\subsection{Notation}
First we confirm the notation. For the elementally facts, we refer to Klingen \cite{Kli}. Let $\Gamma _n=Sp_n(\mathbb{Z})$ be the Siegel modular group of degree $n$ and $\mathbb{H}_n$
the Siegel upper-half space of degree $n$. We denote by
$M_k(\Gamma _n)$ the $\mathbb{C}$-vector space of all
Siegel modular forms of weight $k$ for $\Gamma_n$, and $S_k(\Gamma _n)$ is the subspace of cusp forms.

Any $f(Z)$ in $M_k(\Gamma _n)$ has a Fourier expansion of the form
\[
f(Z)=\sum_{0\le T\in \Lambda _n}a(T;F)q^T,\quad q^T:=e^{2\pi i\text{tr}(TZ)},
\quad Z\in\mathbb{H}_n,
\]
where $T$ runs over all elements of $\Lambda _n$, and
\begin{align*}
\Lambda_n&:=\{ T=(t_{ij})\in Sym_n(\mathbb{Q})\;|\; t_{ii},\;2t_{ij}\in\mathbb{Z}\; \}.
\end{align*}
For a subring $R$ of $\mathbb{C}$, let $M_{k}(\Gamma _n)_{R}\subset M_{k}(\Gamma _n)$ denote the $R$-module of
all modular forms whose Fourier coefficients lie in $R$.

Let $r$ be a non-negative integer with $0\le r\le n-1$. Let $\Delta _{n,r}$ be the (Klingen) parabolic subgroup of $\Gamma _n$ defined by
\[\Delta _{n,r}:=\left\{\begin{pmatrix}* & * \\ 0_{n-r,n+r} & *\end{pmatrix}\in \Gamma _n \right\}.\]
Let $k$ a positive even integer with $k>n+r+1$ and $f\in S_k(\Gamma _r)$ a Hecke eigen form. Then the Klingen-Eisenstein series attached to $f$ is defined by
$$[f]_r^n(Z):=\sum_{
M=\left(\begin{smallmatrix} A & B\\
C & D\end{smallmatrix}\right)\in \Delta _{n,r} \backslash \Gamma_n}
\det(CZ+D)^{-k} f((MZ)^*) \qquad (Z\in {\mathbb H}_n);$$
here $Z^*$ denotes the $r\times r$-submatrix in the upper left corner of
$Z$. This series $[f]_r^n$ defines a Hecke eigen form which belongs to $M_k(\Gamma _n)$. Let $K_f$ be the number field generated over $\mathbb{Q}$ by the eigenvalues of the Hecke operators over $\mathbb{Q}$ on $f$. Then it is known that $[f]_r^n\in M_k(\Gamma _n)_{K_f}$ by \cite{Kuro4,Kuro5,Mizu5}.

Let $\Phi :M_k(\Gamma _n)\rightarrow M_k(\Gamma _{n-1})$ be the Siegel $\Phi$-operator. Then we have
\begin{align}
\label{Phi}
\Phi ([f]_r^n)=
\begin{cases}
[f]_r^{n-1}\ &{\rm if}\ n>r+1,\\
f\ &{\rm if}\ n=r+1.
\end{cases}
\end{align}
%Some results on the Fourier coefficients of $[f]_r^n$ are given in \cite{Bo-1,Bo-2,Koi,Kuro-Mizu,Mizu0,Mizu2}.

%%%%%%%%%%%%%%%%%%%%%%%%%%%%%%%%%%%%%%%%%%%%%%%%%
\section{Main results and their proofs}
\subsection{Main results}
Let $K$ be an algebraic number field and ${\mathcal O}={\mathcal O}_K$ the ring of integers in $K$. For a prime ideal $\frak{p}$ in ${\mathcal O}$, we denote by ${\mathcal O}_{\frak{p}}$ the localization of ${\mathcal O}$ at $\frak{p}$. First our main result concerns ``mod $\frak{p}^m$ cusp forms'' defined as
\begin{Def}
\label{cusp}
Let $f\in M_k(\Gamma _n)_{{\mathcal O}_{\frak{p}}}$. We call $f$ a $mod$ $\frak{p}^m$ $cusp$ $form$ if $\Phi (f)\equiv 0$ mod $\frak{p}^m$.
\end{Def}
\begin{Thm}
\label{Lem3}
For a finite set $S_n(K)$ of prime ideals in $K$ depends on $n$, we have the following: Let $k>2n$ and $\frak{p}$ be a prime ideal of ${\mathcal O}$ with $\frak{p}\not \in S_n(K)$. Let $f\in M_k(\Gamma _n)_{{\mathcal O}_{\frak{p}}}$ be a mod $\frak{p}^m$ cusp form. In other words, we assume that $f\in M_k(\Gamma _n)_{{\mathcal O}_{\frak{p}}}$ satisfies $\Phi (f)\equiv 0$ mod $\frak{p}^m$. Then there exists $g\in S_k(\Gamma _n)_{{\mathcal O}_{\frak{p}}}$ such that $f\equiv g$ mod $\frak{p}^m$.
\end{Thm}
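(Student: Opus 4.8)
The plan is to show that the cusp-form map and the $\Phi$-operator fit into an exact sequence that behaves well under reduction modulo $\mathfrak{p}^m$. The key structural fact is the exact sequence
$$0 \longrightarrow S_k(\Gamma_n) \longrightarrow M_k(\Gamma_n) \xrightarrow{\ \Phi\ } M_k(\Gamma_{n-1}),$$
whose integral analogue over $\mathcal{O}_{\mathfrak{p}}$ is what I need to control. Concretely, the hypothesis says $\Phi(f) \equiv 0 \bmod \mathfrak{p}^m$, i.e. $\Phi(f) \in \mathfrak{p}^m M_k(\Gamma_{n-1})_{\mathcal{O}_{\mathfrak{p}}}$, and I want to produce $g \in S_k(\Gamma_n)_{\mathcal{O}_{\mathfrak{p}}}$ with $f - g \equiv 0 \bmod \mathfrak{p}^m$. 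The natural idea is to find, inside the integral structure, a form $h \in M_k(\Gamma_n)_{\mathcal{O}_{\mathfrak{p}}}$ whose image under $\Phi$ reproduces $\Phi(f)$ exactly, and which is itself divisible by $\mathfrak{p}^m$; then $g := f - h$ is the desired cusp form. This reduces everything to a \emph{lifting} question: given a form in the image of $\Phi$ that is divisible by $\mathfrak{p}^m$, can I lift it back through $\Phi$ without losing that divisibility?

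\medskip

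\noindent
First I would fix $\mathcal{O}_{\mathfrak{p}}$-bases adapted to $\Phi$. Since $M_k(\Gamma_{n-1})$ is finite-dimensional and $\Phi$ is surjective onto $M_k(\Gamma_{n-1})$ when $k > 2n$ (this is why the weight restriction $k>2n$ appears; it guarantees surjectivity of the Siegel operator, e.g. via the existence of Klingen-Eisenstein lifts of all lower-degree forms as recorded in \eqref{Phi}), I can choose elements $F_1, \dots, F_s \in M_k(\Gamma_n)_{K}$ mapping to a $K$-basis $\Phi(F_1), \dots, \Phi(F_s)$ of $M_k(\Gamma_{n-1})$. Expanding $\Phi(f)$ in this basis gives coefficients $c_1, \dots, c_s \in \mathcal{O}_{\mathfrak{p}}$, and the mod-$\mathfrak{p}^m$ vanishing of $\Phi(f)$ forces each $c_j \in \mathfrak{p}^m$ \emph{provided} the $\Phi(F_j)$ form an $\mathcal{O}_{\mathfrak{p}}$-\emph{basis} (not merely a $K$-basis) of $M_k(\Gamma_{n-1})_{\mathcal{O}_{\mathfrak{p}}}$. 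Setting $h := \sum_j c_j F_j$ then yields $\Phi(h) = \Phi(f)$ with $h$ divisible by $\mathfrak{p}^m$ in $M_k(\Gamma_n)_{\mathcal{O}_{\mathfrak{p}}}$, and $g := f - h$ finishes the argument.

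\medskip

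\noindent
The finite exceptional set $S_n(K)$ is exactly where the obstructions to this clean picture are swept. The main obstacle I anticipate is \textbf{integrality of the splitting}: a priori the lifts $F_j$ and the basis $\Phi(F_j)$ are only defined over $K$ (or over the field generated by eigenvalues, via the Klingen-Eisenstein series of \eqref{Phi}), and the transition matrices between the chosen $K$-bases and genuine $\mathcal{O}$-bases of $M_k(\Gamma_n)_{\mathcal{O}}$ and $M_k(\Gamma_{n-1})_{\mathcal{O}}$ have determinants lying in $K^\times$. For the reduction argument to go through, these determinants and the entries of $\Phi$ expressed in integral bases must be \emph{units} at $\mathfrak{p}$, which fails for only the finitely many primes dividing the relevant numerators and denominators; collecting all such bad primes (the denominators of the $F_j$, the primes where $\Phi$ fails to be surjective integrally, and the primes dividing the index of the lattice $\Phi(M_k(\Gamma_n)_{\mathcal{O}})$ in $M_k(\Gamma_{n-1})_{\mathcal{O}}$) produces the finite set $S_n(K)$, depending only on $n$ (and $k$) through the finite-rank lattices involved. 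I would therefore define $S_n(K)$ as the finite set of primes at which $\Phi$ restricted to integral structures fails to be a surjection of free $\mathcal{O}_{\mathfrak{p}}$-modules with the cusp forms as kernel, and for $\mathfrak{p}\notin S_n(K)$ the localization $\mathcal{O}_{\mathfrak{p}}$ is a discrete valuation ring over which every finitely generated torsion-free module is free, so the exact sequence splits integrally and the coefficient-wise divisibility argument above applies verbatim.
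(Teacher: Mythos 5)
Your core mechanism is sound and is in fact the same one the paper uses: produce $h\in M_k(\Gamma_n)_{{\mathcal O}_{\frak{p}}}$ with $\Phi(h)=\Phi(f)$ and $h\equiv 0 \bmod \frak{p}^m$, then set $g:=f-h$. Your per-weight verification (all Fourier coefficients of $\Phi(f)$ lie in $\frak{p}^m{\mathcal O}_{\frak{p}}$, hence $\Phi(f)\in\frak{p}^m M_k(\Gamma_{n-1})_{{\mathcal O}_{\frak{p}}}$ since ${\mathcal O}_{\frak{p}}$ is a DVR, hence the coordinates $c_j$ in an ${\mathcal O}_{\frak{p}}$-basis of $M_k(\Gamma_{n-1})_{{\mathcal O}_{\frak{p}}}$ lie in $\frak{p}^m$) is correct for each fixed $k$.

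The genuine gap is in the exceptional set. The theorem asserts a \emph{single} finite set $S_n(K)$, depending only on $n$ and $K$, that works for \emph{every} weight $k>2n$. Your construction produces, for each weight $k$, a finite set of bad primes (denominators of the chosen lifts $F_j$, primes dividing the index of $\Phi(M_k(\Gamma_n)_{\mathcal O})$ in $M_k(\Gamma_{n-1})_{\mathcal O}$, etc.), and you yourself write that this set depends on ``$n$ (and $k$).'' Taking the union over the infinitely many weights $k>2n$ gives no finiteness, so the statement as written is not proved. This is exactly the point the paper's extra machinery addresses: it invokes the finite generation of the graded ring $\bigoplus_k M_k(\Gamma_{n-1})_{{\mathcal O}_{\frak{p}}}={\mathcal O}_{\frak{p}}[f_1,\dots,f_s]/C$ (Lemma \ref{Lem6}, via Faltings--Chai), replaces the generators by finitely many monomials all of weight $>2n$ (Lemma \ref{Lem4}) so that each generator admits a $\Phi$-preimage by Klingen's surjectivity (Lemma \ref{Lem1}), and then defines $S_n(K)$ as the primes at which some \emph{generator} fails to admit a $\frak{p}$-integral preimage. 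Because there are only finitely many generators and they serve all weights simultaneously (any $\Phi(f)$, after dividing by a coefficient $\gamma$ of minimal valuation, is an ${\mathcal O}_{\frak{p}}$-polynomial $Q(f_1,\dots,f_s)$, and $h:=\gamma\,Q(F_1,\dots,F_s)$ is the required lift), the resulting exceptional set is finite and independent of $k$. To repair your argument you would need to replace your weight-by-weight choice of lattice bases by this uniform, ring-theoretic choice of lifts.
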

\begin{Rem}
\label{k:even}
Since there does not exist non-cusp form of odd weight, the statement for the case where $k$ is odd in Theorem \ref{Lem3} is trivial.
\end{Rem}
We will see how to determine the exceptional set $S_n(K)$ in the later section (Definition \ref{S_n}).
As an application of this theorem, we obtain congruences between the Klingen-Eisenstein series and cusp forms:

Let $v_\frak{p}$ be the normalized additive valuation with respect to $\frak{p}$. We define two values $v_\frak{p}(f)$ and $v_\frak{p}^{(n')}(f)$ for $f\in M_k(\Gamma _n)_K$ by
\begin{align*}
&v_\frak{p}(f):=\min \{v_\frak{p}(a_f(T))\;|\;T \in \Lambda _{n} \}, \\
&v^{(n')}_\frak{p}(f):=\min \{v_\frak{p}(a_f(T))\;|\;T \in \Lambda _{n},\ {\rm rank}(T)=n' \}\quad (0\le n'\le n).
\end{align*}
Then we have
\begin{Cor}
\label{ThmM}
Let $k>2n$ be even and $f\in S_k(\Gamma _r)_{K_f}$ ($n>r$) a Hecke eigen form. For the Klingen-Eisenstein series $[f]_r^n$ attached to $f$, we choose a prime ideal $\frak{p}$ in ${\mathcal O}_{K_f}$ with $\frak{p}\not \in S_n(K_f)$ such that $v_\frak{p}^{(n)}([f]_r^n)= v_\frak{p}(\Phi ([f]_r^{n}))-m$ ($m\in \mathbb{Z}_{\ge 1}$). Then there exists $F\in S_k(\Gamma _n)_{{\mathcal O}_{\frak{p}}}$ such that $\alpha [f]_r^n\equiv F$ mod $\frak{p}^m$ for some $0\neq \alpha \in \frak{p}^{m}$.
\end{Cor}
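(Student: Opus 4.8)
The plan is to rescale the Klingen--Eisenstein series $[f]_r^n$ so that it becomes an \emph{integral} mod $\frak{p}^m$ cusp form, and then to invoke Theorem~\ref{Lem3}. Put $w:=v_\frak{p}(\Phi([f]_r^n))$ and $v:=v_\frak{p}^{(n)}([f]_r^n)$, so that the hypothesis on $\frak{p}$ reads $v=w-m$ with $m\ge 1$; in particular $v<w$.

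The first step is to compute the global valuation $v_\frak{p}([f]_r^n)=\min\{v_\frak{p}(a(T;[f]_r^n))\mid T\in\Lambda_n\}$. I claim that the minimum of $v_\frak{p}(a(T;[f]_r^n))$ taken over the degenerate indices ${\rm rank}(T)\le n-1$ is exactly $w$. The Fourier coefficients of a form in $M_k(\Gamma_n)$ are invariant under $T\mapsto {}^tUTU$ for $U\in GL_n(\mathbb{Z})$, and a positive semi-definite $T\in\Lambda_n$ with ${\rm rank}(T)\le n-1$ has a saturated kernel lattice $\ker(T)\cap\mathbb{Z}^n$ of rank $\ge 1$; hence one can choose $U\in GL_n(\mathbb{Z})$ so that ${}^tUTU=\left(\begin{smallmatrix}T' & 0\\ 0 & 0\end{smallmatrix}\right)$ has vanishing last row and column, with $T'\in\Lambda_{n-1}$. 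Then
\[
a(T;[f]_r^n)=a({}^tUTU;[f]_r^n)=a(T';\Phi([f]_r^n)),
\]
so every coefficient of rank $\le n-1$ occurs among the Fourier coefficients of $\Phi([f]_r^n)$ (and conversely). The two minima therefore coincide, and combined with $v<w$ this gives $v_\frak{p}([f]_r^n)=\min\{v,w\}=v=w-m$.

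The second step is the choice of scalar. I would take any nonzero $\alpha\in\frak{p}^{\max\{m,-v\}}$, so that $v_\frak{p}(\alpha)\ge -v$ and $v_\frak{p}(\alpha)\ge m$. The first inequality, together with $v_\frak{p}([f]_r^n)=v$, shows $\alpha[f]_r^n\in M_k(\Gamma_n)_{\mathcal{O}_\frak{p}}$. Using $-v=m-w$ it also gives
\[
v_\frak{p}\big(\Phi(\alpha[f]_r^n)\big)=v_\frak{p}(\alpha)+w\ge (m-w)+w=m,
\]
so that $\Phi(\alpha[f]_r^n)\equiv 0\bmod\frak{p}^m$; that is, $\alpha[f]_r^n$ is a mod $\frak{p}^m$ cusp form in the sense of Definition~\ref{cusp}. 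The second inequality records that $\alpha\in\frak{p}^m$, and $\alpha\neq 0$ by construction. Since $k>2n$ is even and $\frak{p}\notin S_n(K_f)$, Theorem~\ref{Lem3} (applied over $K=K_f$) then yields $F\in S_k(\Gamma_n)_{\mathcal{O}_\frak{p}}$ with $\alpha[f]_r^n\equiv F\bmod\frak{p}^m$, which is the assertion.

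The only step that is more than valuation bookkeeping is the identity $\min_{{\rm rank}(T)\le n-1}v_\frak{p}(a(T;[f]_r^n))=v_\frak{p}(\Phi([f]_r^n))$: one must check that the $GL_n(\mathbb{Z})$-reduction of an arbitrary degenerate index genuinely lands in the block shape recorded by a single application of $\Phi$, including the case ${\rm rank}(T)<n-1$, where $T'$ is itself degenerate but still a legitimate index in $\Lambda_{n-1}$. The existence of the honest cusp form $F$ is of course the real content, but that is exactly what Theorem~\ref{Lem3} supplies.
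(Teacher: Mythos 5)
Your proof is correct and follows exactly the route the paper intends: the paper gives no separate proof of the corollary beyond Remark (1), which records that the valuation hypothesis makes $\alpha[f]_r^n$ an integral mod $\frak{p}^m$ cusp form for a suitable $\alpha\in\frak{p}^m$, after which Theorem \ref{Lem3} applies. You have simply made explicit the bookkeeping the authors leave implicit, including the standard identification of the rank $\le n-1$ Fourier coefficients of $[f]_r^n$ with those of $\Phi([f]_r^n)$ via $GL_n(\mathbb{Z})$-reduction.
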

\begin{Rem}
  (1) The assumption $v_\frak{p}^{(n)}([f]_r^n)=v_\frak{p}(\Phi([f]_r^{n}))-m$ is equivalent to the fact that $\alpha [f]_r^n$ is a non-zero mod $\frak{p}^m$ cusp form for some $\alpha \in \frak{p}^m$ satisfying $v_{\frak{p}}(\alpha [f]_r^n)=0$. \\
  (2) For a prime $l$ and $1 \le i \le n$, we define Hecke operators $T(l)$ and
  $T_{i}(l^{2})$ by $T(l) = \Gamma_{n}\mathrm{diag}(1_{n}, l 1_{n})\Gamma_{n}$
  and $T_{i}(l^{2}) = \Gamma_{n}\mathrm{diag}(1_{i}, l 1_{n-i}, l^{2} 1_{i}, l
  1_{n-i})\Gamma_{n}$.  For an eigen form $F$ and a Hecke operator $T$, we
  denote by $\lambda(T, F)$ the Hecke eigenvalue of $T$.  By Deligne-Serre
  lifting lemma (\cite{D-S} Lemma 6.11), we can take an eigen form $G\in S_k(\Gamma _n)$
  such that $\lambda(T, [f]_r^n) \equiv \lambda(T, G)
  \bmod{\frak{p}}$ for $T = T(l),\
  T_{i}(l^{2})$, $l \ne p$  and $1 \le i \le n$.\\
  (3) If $r=0$, $[f]_r ^n$ is the ordinary Siegel-Eisenstein series. In
  particular, if $n=2$, this was proved by \cite{Ki-Na}.
\end{Rem}
Using the integrality theorem obtained by Mizumoto \cite{Mizu2}, we can give conditions on $\frak{p}$ to find congruences for the Klingen-Eisenstein series and cusp forms as in Corollary \ref{ThmM}. We shall introduce an example:

To apply his theorem, we assume that \\
~~~(i) $f\in M_k(\Gamma _r)_{O_{K_f}}$ and one of the Fourier coefficients of $f$ is equal to $1$,\\
~~~(ii) $L(k-r,f,{\rm St})\neq 0$, where $L(s,f,{\rm St})$ is the standard $L$-function of $f$. \\
Then Mizumoto's result states that
\[a(T;[f]_r^n)\in c_k(r,n)\mu _k(r)^{-1}\prod _{i=r+1}^{\left[ \frac{n+r}{2} \right] } {\rm Num} \left( \frac{B_{2k-2i}}{k-i} \right)^{-1} \cdot L^*(k-r,f,{\rm St})^{-1}{\mathcal A}(f)^{-1} \]
for some $c_k(r,n)\in \mathbb{Q}^{\times}$ and $\mu _k(r)\in \mathbb{Z}$ which are computable. Here ${\mathcal A}(f)$ is an integral ideal of $O_{K_f}$, ${\rm Num}(*)$ is the numerator,
\begin{align*}
&L^*(k-r,f,{\rm St}):=\frac{L(k-r,f,{\rm St})}{\pi ^{(2r+1)k-\frac{3r(r+1)}{2}}(f,f)}\in K_f
\end{align*}
and $(f,f)$ is the Petersson norm of $f$. For the precise definitions of these numbers, see \cite{Mizu2}. This property tells us all possible primes appearing in denominators of all Fourier coefficients of $[f]_r^n$, since the property (\ref{Phi}).

For example, we consider a simple case where $r=n-1$. We choose $\frak{p}$ satisfying
\[v_{\frak{p}}\left(
c_k(r,n)\mu _k(r)^{-1}\prod _{i=r+1}^{\left[ \frac{n+r}{2} \right]} {\rm Num} \left( \frac{B_{2k-2i}}{k-i} \right)^{-1} \cdot L^*(k-r,f,{\rm St})^{-1}\right) =-m.
\]
Then $\alpha [f]_r^n$ is a mod $\frak{p}^m$ cusp form for any $\alpha \in \frak{p}^m$. Applying Theorem \ref{Lem3}, we can find $F\in S_k(\Gamma _n)_{{\mathcal O}_{\frak{p}}}$ such that $\alpha [f]_r^n\equiv F$ mod $\frak{p}^m$. Remark that it may become $\alpha [f]_r^n\equiv F\equiv 0$ mod $\frak{p}^m$ for this choice of $\frak{p}$, compared with Corollary \ref{ThmM}.

%%%%%%%%%%%%%%%%%%%%%%%%%%%%%%%%%%%%%%%%%%
\subsection{Proof of the theorem}
In order to define $S_n(K)$ and to prove the theorem, we start with introducing some basic properties.
\begin{Lem}
\label{Lem6}
Let $\bigoplus_{k} M_k(\Gamma _{n})_{\mathbb{Z}_{(p)}}=\mathbb{Z}_{(p)}[f_1,\cdots,f_{s}]/C$ with a relation $C$ among the generators. Then we have $\bigoplus_{k} M_k(\Gamma _{n})_{{\mathcal O}_{\frak{p}}}={\mathcal O}_{\frak{p}}[f_1,\cdots,f_{s}]/C$.
\end{Lem}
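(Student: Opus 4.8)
The plan is to deduce the statement from a single base-change isomorphism carried out separately in each weight. Writing $L_k := M_k(\Gamma_n)_{\mathbb{Z}_{(p)}}$, the heart of the matter is the natural identification
\[
M_k(\Gamma_n)_{\mathcal{O}_{\frak{p}}} \;=\; M_k(\Gamma_n)_{\mathbb{Z}_{(p)}} \otimes_{\mathbb{Z}_{(p)}} \mathcal{O}_{\frak{p}} \qquad (\text{for every } k),
\]
induced by $g\otimes c\mapsto cg$. Granting this, summing over $k$ gives $\bigoplus_k M_k(\Gamma_n)_{\mathcal{O}_{\frak{p}}}=\big(\bigoplus_k M_k(\Gamma_n)_{\mathbb{Z}_{(p)}}\big)\otimes_{\mathbb{Z}_{(p)}}\mathcal{O}_{\frak{p}}$, and tensoring the given presentation with $\mathcal{O}_{\frak{p}}$ then finishes the proof. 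At the outset I would record the two structural facts I will use: first, that $L_k$ is a finitely generated torsion-free, hence free, $\mathbb{Z}_{(p)}$-module (it is a graded piece of a finitely generated $\mathbb{Z}_{(p)}$-algebra, and it is torsion-free as a subspace of $M_k(\Gamma_n)_{\mathbb{C}}$); second, that $\mathcal{O}_{\frak{p}}$ is flat over $\mathbb{Z}_{(p)}$, being torsion-free over the discrete valuation ring $\mathbb{Z}_{(p)}$ (equivalently, a localization of the finite free $\mathbb{Z}_{(p)}$-algebra $\mathcal{O}\otimes_{\mathbb{Z}}\mathbb{Z}_{(p)}$).

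For the base-change isomorphism, fix a $\mathbb{Z}_{(p)}$-basis $g_1,\dots,g_d$ of $L_k$; by the arithmeticity of Siegel modular forms (existence of a basis with rational Fourier coefficients) it is also a $K$-basis of $M_k(\Gamma_n)_{K}$. Injectivity is immediate, since the $g_i$ are $\mathcal{O}_{\frak{p}}$-linearly independent and have coefficients in $\mathbb{Z}_{(p)}\subseteq\mathcal{O}_{\frak{p}}$. The main step is surjectivity. Let $w=\sum_i c_i g_i\in M_k(\Gamma_n)_{\mathcal{O}_{\frak{p}}}$ with $c_i\in K$, so that $a(T;w)\in\mathcal{O}_{\frak{p}}$ for all $T$; I must show $c_i\in\mathcal{O}_{\frak{p}}$. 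If some $c_i\notin\mathcal{O}_{\frak{p}}$, set $N=-\min_i v_{\frak{p}}(c_i)>0$ and choose a uniformizer $\pi$ of $\mathcal{O}_{\frak{p}}$. Then $\pi^N w=\sum_i(\pi^N c_i)g_i$ has every Fourier coefficient in $\frak{p}$, while at least one $\pi^N c_i$ is a unit of $\mathcal{O}_{\frak{p}}$. Reducing the $q$-expansions modulo $\frak{p}$ and writing $\bar h=\sum_T\overline{a(T;h)}\,q^T\in(\mathcal{O}/\frak{p})[[q^T]]$, we obtain $0=\overline{\pi^N w}=\sum_i\overline{\pi^N c_i}\,\overline{g_i}$ with not all $\overline{\pi^N c_i}$ zero. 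This contradicts the linear independence of the $\overline{g_i}$, which holds because the integral structure is cut out by a condition on Fourier coefficients: any $g\in L_k$ all of whose coefficients lie in $p\mathbb{Z}_{(p)}$ satisfies $g/p\in L_k$, so the reduction $L_k/pL_k\to\mathbb{F}_p[[q^T]]$ is injective and carries the basis to $\mathbb{F}_p$-independent, hence $\mathcal{O}/\frak{p}$-independent, series. Therefore $c_i\in\mathcal{O}_{\frak{p}}$ and the map is onto.

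With the base-change isomorphism in hand I would conclude formally. The presentation gives an exact sequence $0\to C\to \mathbb{Z}_{(p)}[f_1,\dots,f_s]\to \bigoplus_k M_k(\Gamma_n)_{\mathbb{Z}_{(p)}}\to 0$, where $C$ is the ideal of relations. Applying $-\otimes_{\mathbb{Z}_{(p)}}\mathcal{O}_{\frak{p}}$, flatness of $\mathcal{O}_{\frak{p}}$ keeps the sequence exact, identifies $\mathbb{Z}_{(p)}[f_1,\dots,f_s]\otimes_{\mathbb{Z}_{(p)}}\mathcal{O}_{\frak{p}}$ with $\mathcal{O}_{\frak{p}}[f_1,\dots,f_s]$, and identifies the image of $C\otimes\mathcal{O}_{\frak{p}}$ with the extended relation ideal, still denoted $C$. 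Combined with the degreewise identification $\big(\bigoplus_k M_k(\Gamma_n)_{\mathbb{Z}_{(p)}}\big)\otimes_{\mathbb{Z}_{(p)}}\mathcal{O}_{\frak{p}}=\bigoplus_k M_k(\Gamma_n)_{\mathcal{O}_{\frak{p}}}$, this yields $\bigoplus_k M_k(\Gamma_n)_{\mathcal{O}_{\frak{p}}}=\mathcal{O}_{\frak{p}}[f_1,\dots,f_s]/C$, as claimed. I expect the surjectivity in the base-change step to be the only genuine obstacle; everything else is formal. That step is precisely where the coefficientwise definition of the integral structures—its automatic $p$-saturation—does the work for which a geometric $q$-expansion principle would otherwise be invoked.
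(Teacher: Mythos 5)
Your proof is correct and follows essentially the same route as the paper: both reduce the lemma to the weight-by-weight base-change identity $M_k(\Gamma_n)_{\mathbb{Z}_{(p)}}\otimes_{\mathbb{Z}_{(p)}}{\mathcal O}_{\frak{p}}=M_k(\Gamma_n)_{{\mathcal O}_{\frak{p}}}$ and then tensor the given presentation with ${\mathcal O}_{\frak{p}}$. The only difference is that the paper obtains that identity by citing the argument of Mizumoto's Lemma A.4, whereas you supply the freeness, $p$-saturation and $q$-expansion argument for it in full.
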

\begin{proof}
  By the same argument of Mizumoto \cite{Mizu3} Lemma A.4, we have
  $M_{k}(\Gamma_{n})_{\mathbb{Z}_{(p)}}\otimes_{\mathbb{Z}_{(p)}}
  {\mathcal O}_{\frak{p}} = M_k(\Gamma _{n})_{{\mathcal O}_{\frak{p}}}$.
  Thus we have
  \begin{align*}
    \bigoplus_{k} M_k(\Gamma _{n})_{{\mathcal O}_{\frak{p}}}
    &=
    \left(
      \bigoplus_{k} M_k(\Gamma _{n})_{\mathbb{Z}_{(p)}}
    \right)
    \otimes_{\mathbb{Z}_{(p)}}{\mathcal O}_{\frak{p}}\\
    &=\left(
      \mathbb{Z}_{(p)}[f_1,\cdots,f_{s}]/C
    \right) \otimes_{\mathbb{Z}_{(p)}}{\mathcal O}_{\frak{p}}
    = {\mathcal O}_{\frak{p}}[f_1,\cdots,f_{s}]/C.
  \end{align*}
\end{proof}

The finite generation of $\bigoplus_{k} M_k(\Gamma _{n})_{\mathbb{Z}}$ is known by Faltings-Chai \cite{F-C}. Namely, we always assume that $\bigoplus_{k} M_k(\Gamma _{n})_{\mathbb{Z}_{(p)}}=\mathbb{Z}_{(p)}[f_1,\cdots,f_{s}]/C$ for any prime $p$ and hence also that $\bigoplus_{k} M_k(\Gamma _{n})_{{\mathcal O}_{\frak{p}}}={\mathcal O}_{\frak{p}}[f_1,\cdots,f_{s}]/C$ for any prime ideal $\frak{p}$.
\begin{Lem}
\label{Lem4}
Assume that $\bigoplus_{k} M_k(\Gamma _{n})_{{\mathcal O}_{\frak{p}}}={\mathcal O}_{\frak{p}}[f_1,\cdots,f_{s}]/C$ with $f_i\in M_{k_i}(\Gamma _n)_{{\mathcal O}_{\frak{p}}}$. Let $M$ be a natural number. We take the minimum of integers $\alpha _i\in \mathbb{Z}_{\ge 0}$ such that, the weight of $f_i^{\alpha _i}$ is strictly greater than $M$. Then the graded algebra $\bigoplus_{M<k} M_k(\Gamma _{n})_{{\mathcal O}_{\frak{p}}}$ is generated over ${\mathcal O}_{\frak{p}}$ by the following finitely many monomials;
\begin{align}
&f_1^{\alpha _1},\cdots,f_s^{\alpha _s}, \label{1} \\
&f_1^{i_1}\cdots f_s^{i_s}\quad (i_1k_1+\cdots +i_sk_s>M,\ 0\le i_j< 2\alpha _j) \label{2}.
\end{align}
\end{Lem}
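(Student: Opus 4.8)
The plan is to reduce the claim to a statement about single monomials and then argue by induction on total degree. Since the relations in $C$ hold among modular forms of fixed weights, the ideal $C$ is homogeneous and the quotient $\bigoplus_{k} M_k(\Gamma_n)_{{\mathcal O}_{\frak{p}}}$ is a graded ${\mathcal O}_{\frak{p}}$-algebra. Hence every homogeneous element of weight $k>M$ is an ${\mathcal O}_{\frak{p}}$-linear combination of monomials $f_1^{j_1}\cdots f_s^{j_s}$ with $j_1k_1+\cdots+j_sk_s=k>M$, so $\bigoplus_{M<k} M_k(\Gamma_n)_{{\mathcal O}_{\frak{p}}}$ is spanned as an ${\mathcal O}_{\frak{p}}$-module by all such monomials of weight $>M$. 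Writing $A$ for the ${\mathcal O}_{\frak{p}}$-subalgebra generated by the monomials (\ref{1}) and (\ref{2}), it therefore suffices to prove that every monomial $f_1^{j_1}\cdots f_s^{j_s}$ of weight $>M$ lies in $A$.

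First I would record the defining property of the exponents: $\alpha_i$ is the least integer with $\alpha_i k_i>M$, so in particular $\alpha_i\ge 1$. Then I would induct on the total degree $j_1+\cdots+j_s$ of the monomial $m=f_1^{j_1}\cdots f_s^{j_s}$ of weight $>M$. If $0\le j_i<2\alpha_i$ for every $i$, then $m$ is precisely one of the generators listed in (\ref{2}) and we are done. Otherwise choose an index $i$ with $j_i\ge 2\alpha_i$ and factor $m=f_i^{\alpha_i}\cdot m'$, where $m'=f_1^{j_1}\cdots f_i^{j_i-\alpha_i}\cdots f_s^{j_s}$.

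The crucial point is that the cofactor $m'$ still has weight $>M$: since $j_i\ge 2\alpha_i$, the exponent of $f_i$ in $m'$ satisfies $j_i-\alpha_i\ge\alpha_i$, whence
\[
\mathrm{wt}(m')\ \ge\ (j_i-\alpha_i)k_i\ \ge\ \alpha_i k_i\ >\ M
\]
by the defining property of $\alpha_i$. Moreover $m'$ has strictly smaller total degree than $m$ (as $\alpha_i\ge 1$), so by the induction hypothesis $m'\in A$; since $f_i^{\alpha_i}$ is a generator in (\ref{1}) and $A$ is closed under multiplication, $m=f_i^{\alpha_i}m'\in A$. This completes the induction, and the finiteness of the generating set is clear because there are only finitely many tuples $(i_1,\dots,i_s)$ with $0\le i_j<2\alpha_j$.

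I expect the only genuinely delicate points to be bookkeeping rather than conceptual: one must check that the reduction to monomials is legitimate, which is where homogeneity of $C$ enters, and one must verify that factoring out $f_i^{\alpha_i}$ never pushes the weight down to $M$ or below. This last verification is exactly the reason the bound in (\ref{2}) is $2\alpha_j$ rather than $\alpha_j$: after removing one copy of $f_i^{\alpha_i}$, at least $\alpha_i$ further copies of $f_i$ remain, and these alone contribute weight $\ge\alpha_i k_i>M$, keeping the cofactor inside the target algebra so that the induction can be applied.
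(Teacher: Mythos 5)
Your proof is correct and rests on the same key idea as the paper's: reduce to monomials via homogeneity, and factor out copies of $f_i^{\alpha_i}$ while observing that at least $\alpha_i$ copies of $f_i$ remain in the cofactor, so its weight stays strictly above $M$. The paper performs the whole decomposition in one step by writing $a_i=\alpha_i q_i+r_i$ and splitting off all the $\alpha_i$-power blocks at once, whereas you peel off one block at a time by induction on total degree; this is only a cosmetic difference.
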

\begin{proof}
First, we remark that any $g\in M_{k}(\Gamma _n)_{{\mathcal O}_{\frak{p}}}$ can be written by a liner combination of monomials of the form $f_1^{a_1}\cdots f_s^{a_s}$. Hence we may consider only the case $g=f_1^{a_1}\cdots f_s^{a_s}$.

Let $k_0:=\alpha _1k_1+\cdots +\alpha _sk_s$. If $2k_0\ge k>M$, then the assertion is trivial. Hence, we assume that $k>2k_0$. Now we consider $a_i=\alpha _iq_i+r_i$ ($0\le r_i<\alpha _i$). Then there exists $j_0$ such that $q_{j_0}\ge 1$ because of $k>2k_0$. In this case, we may consider the following decomposition;
\begin{align*}
&g=h_1\cdot h_2,\\
&h_1:=f_1^{r_1}\cdots f_{j_0-1}^{r_{j_0-1}}f_{j_0}^{r_{j_0}+\alpha _{j_0}}f_{j_0+1}^{r_{j_0+1}}\cdots f_{s}^{r_{s}},\\
&h_2:=f_1^{\alpha _1q_1}\cdots f_{j_0-1}^{\alpha _{j_0-1}q_{j_0-1}}f_{j_0}^{\alpha _{j_0}(q_{j_0}-1)}f_{j_0+1}^{\alpha _{j_0+1}q_{j_0+1}}\cdots f_{s}^{\alpha _sq_s}.
\end{align*}
Then, both $h_1$ and $h_2$ are written by the monomials of (\ref{1}) and (\ref{2}). This completes the proof.
\end{proof}
\begin{Lem}
\label{Lem1}
For $k>2n$, the restricted Siegel $\Phi$-operator $\Phi _K:M_k(\Gamma _n)_{K}\rightarrow M_k(\Gamma _{n-1})_{K}$ is surjective.
\end{Lem}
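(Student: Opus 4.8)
The plan is to reduce the surjectivity over the number field $K$ to the classical surjectivity over $\mathbb{C}$ by a rationality and base-change argument. The starting point is Klingen's structure theorem, which over $\mathbb{C}$ gives a decomposition $M_k(\Gamma_{n-1})=\bigoplus_{r=0}^{n-1}[S_k(\Gamma_r)]_r^{n-1}$ into the spaces spanned by Klingen-Eisenstein series attached to cusp forms of degree $r$ (with $S_k(\Gamma_0)=\mathbb{C}$ accounting for the Siegel-Eisenstein series). For any cusp form $\phi\in S_k(\Gamma_r)$ with $0\le r\le n-1$, the hypothesis $k>2n$ guarantees $k>n+r+1$, so the Klingen-Eisenstein series $[\phi]_r^n\in M_k(\Gamma_n)$ is defined, and by \eqref{Phi} it satisfies $\Phi([\phi]_r^n)=[\phi]_r^{n-1}$. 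Hence every generator of $M_k(\Gamma_{n-1})$ lies in the image of $\Phi$, so $\Phi\colon M_k(\Gamma_n)\to M_k(\Gamma_{n-1})$ is surjective over $\mathbb{C}$.

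Next I would descend from $\mathbb{C}$ to $\mathbb{Q}$. Two ingredients are needed. First, $\Phi$ is defined over $\mathbb{Q}$, i.e. $\Phi(M_k(\Gamma_n)_{\mathbb{Q}})\subseteq M_k(\Gamma_{n-1})_{\mathbb{Q}}$; this is immediate because the Fourier coefficients of $\Phi(f)$ form a subset of those of $f$. Second, the $\mathbb{Q}$-rationality $M_k(\Gamma_m)_{\mathbb{Q}}\otimes_{\mathbb{Q}}\mathbb{C}=M_k(\Gamma_m)$ for all $m$, which follows from the existence of a basis of forms with rational Fourier coefficients, guaranteed by the finite generation of $\bigoplus_k M_k(\Gamma_m)_{\mathbb{Z}}$ (Faltings-Chai, cited above). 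Consequently $\Phi_{\mathbb{C}}$ is obtained from the $\mathbb{Q}$-linear map $\Phi_{\mathbb{Q}}\colon M_k(\Gamma_n)_{\mathbb{Q}}\to M_k(\Gamma_{n-1})_{\mathbb{Q}}$ by extension of scalars. Since the rank of a linear map between finite-dimensional vector spaces is unchanged under field extension, surjectivity of $\Phi_{\mathbb{C}}$ forces $\dim_{\mathbb{Q}}\Phi_{\mathbb{Q}}(M_k(\Gamma_n)_{\mathbb{Q}})=\dim_{\mathbb{C}}M_k(\Gamma_{n-1})=\dim_{\mathbb{Q}}M_k(\Gamma_{n-1})_{\mathbb{Q}}$, so $\Phi_{\mathbb{Q}}$ is already surjective.

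Finally I would extend from $\mathbb{Q}$ to $K$. Because $-\otimes_{\mathbb{Q}}K$ is exact and $M_k(\Gamma_m)_K=M_k(\Gamma_m)_{\mathbb{Q}}\otimes_{\mathbb{Q}}K$, the map $\Phi_K$ is identified with $\Phi_{\mathbb{Q}}\otimes\mathrm{id}_K$, and tensoring a surjection by a field preserves surjectivity. This yields the desired surjectivity of $\Phi_K$.

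The main obstacle is not the surjectivity over $\mathbb{C}$, which is classical, but cleanly packaging the rationality formalism: one must record that $\Phi$ respects the $\mathbb{Q}$-structure and that the Fourier-coefficient definition of $M_k(\Gamma_m)_R$ is compatible with base change, so that $M_k(\Gamma_m)_K=M_k(\Gamma_m)_{\mathbb{Q}}\otimes_{\mathbb{Q}}K$ holds exactly. Once this is in place, surjectivity both descends and ascends by the invariance of rank under field extension, and no analytic estimates are required.
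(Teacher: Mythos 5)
Your argument is correct and takes essentially the same route as the paper: both reduce the surjectivity of $\Phi_K$ to Klingen's classical surjectivity over $\mathbb{C}$ via the rationality $M_k(\Gamma_m)_K=M_k(\Gamma_m)_{\mathbb{Q}}\otimes_{\mathbb{Q}}K$ together with base change. The only cosmetic differences are that the paper invokes faithful flatness of $\mathbb{C}$ over $K$ to handle descent and ascent in one stroke and cites Shimura for the rational structure, whereas you split the base change into two steps and spell out the Klingen--Eisenstein decomposition underlying the $\mathbb{C}$-surjectivity.
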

\begin{proof}
  By Shimura \cite{Shim}, we have $M_k(\Gamma _n)_{K}=M_k(\Gamma
  _n)_{\mathbb{Q}}\otimes_{\mathbb{Q}} K$.  Since $\mathbb{C}$ is faithfully
  flat over $K$, the surjectivity of $\Phi_{K}$ is equivalent to that of $\Phi:
  M_k(\Gamma _n)_{\mathbb{C}}\rightarrow M_k(\Gamma _{n-1})_{\mathbb{C}}$.  The
  surjectivity of $\Phi$ was proved by Klingen \cite{Kli}.  Therefore, we obtain
  the assertion of the lemma.
\end{proof}
In order to prove the theorem, it suffices to consider the case where the weight is even (see Remark \ref{k:even}). From Lemma \ref{Lem4}, we may assume that $\bigoplus_{2n<k\in 2\mathbb{Z}} M_k(\Gamma _{n-1})_{{\mathcal O}_{\frak{p}}}={\mathcal O}_{\frak{p}}[f_1,\cdots,f_{s}]/C$. Applying Lemma \ref{Lem1}, we have $\Phi ^{-1}_K(f_i)\neq \phi $ for any $i$ with $1\le i\le s$. 

We are now in a position to define the set $S_n(K)$ and to prove Theorem \ref{Lem3}. 
\begin{Def}
\label{S_n}
Let $S_n(K)$ be the set of all prime ideals $\frak{p}$ in ${\mathcal O}$ such that, there exists $i$ which satisfies that for all $F_i\in \Phi _K^{-1}(f_i)$ we have $v_{\frak{p}}(F_i)<0$. Note that $S_n(K)$ is a finite set depends on $n$ not depends on generators of $\bigoplus _{2n<k\in 2\mathbb{Z}}M_{k}(\Gamma _{n-1})_{{\mathcal O}_{\frak{p}}}$ (Remark \ref{Rem4} in Subsection \ref{REM}).
\end{Def}
\begin{proof}[Proof of Theorem \ref{Lem3}]
We choose a polynomial $P\in {\mathcal O}_{\frak{p}}[x_1,\cdots ,x_{s}]$ such that $\Phi (F)=P(f_1,\cdots ,f_s)$. Since $\Phi (F)=P(f_1,\cdots ,f_s)\equiv 0$ mod $\frak{p}^m$, there exists $\gamma \in \frak{p}^m$ such that $\displaystyle \gamma ^{-1} \Phi (F)\in M_k(\Gamma _{n-1})_{{\mathcal O}_{\frak{p}}}$. In fact, we may choose $\gamma $ as $\gamma :=a(T_0;\Phi(f))$ for some $T_0$ which satisfies $v_\frak{p}(a(T_0;\Phi (f)))=v_\frak{p}(\Phi (f))$. Hence we can find $Q\in {\mathcal O}_{\frak{p}}[x_1,\cdots ,x_{s}]$ such that $\displaystyle \gamma ^{-1} \Phi (F)=Q(f_1,\cdots ,f_s)$. Since $\frak{p}\not \in S_n(K)$, there exists $F_i\in \Phi _K^{-1}(f_i)$ such that $v_{\frak{p}}(F_i)\ge 0$ for each $i$ with $1\le i \le s$. Then $F\in \gamma Q(F_1,\cdots,F_s)+{\rm Ker} \Phi$. Hence there exists $G\in {\rm Ker}\Phi$ such that $F=\gamma Q(F_1,\cdots,F_s)+G$. Note that $Q(F_1,\cdots,F_s)\in M_k(\Gamma _n)_{{\mathcal O}_{\frak{p}}}$ because of $v_{\frak{p}}(F_i)\ge 0$ and hence $G\in S_k(\Gamma _n)_{{\mathcal O}_{\frak{p}}}$. This implies $F\equiv G$ mod $\frak{p}^m$ because of $\gamma \in \frak{p}^m$. This completes the proof of Theorem \ref{Lem3}.
\end{proof}
%%%%%%%%%%%%%%%%%%%%%%%%%%%%
\subsection{Remark on ${\boldsymbol S_n(K)}$}
\label{REM}
\begin{Rem}
\label{Rem4}
For each prime ideal $\frak{p}$, it does not depend on the choice of generators of $\bigoplus_{2n<k\in 2\mathbb{Z}}M_k(\Gamma _{n-1})_{{\mathcal O}_{\frak{p}}}$ whether $\frak{p}$ belongs to the exceptional set $S_n(K)$ or not. Namely, we get the following property:
Assume that $\bigoplus _{2n<k\in 2\mathbb{Z}}M_k(\Gamma _{n-1})_{{\mathcal O}_{\frak{p}}}={\mathcal O}_{\frak{p}}[f_1,\cdots,f_s]/C={\mathcal O}_{\frak{p}}[f_1',\cdots,f_t']/C'$. If we can take $F_i\in M_k(\Gamma _{n})_{{\mathcal O}_{\frak{p}}}$ such that $\Phi (F_i)=f_i$ for each $i$ with $1\le i\le s$, then we can take $F'_j\in M_k(\Gamma _n)_{{\mathcal O}_{\frak{p}}}$ such that $\Phi (F'_j)=f'_j$ for each $j$ with $1\le j\le t$.
\end{Rem}
\begin{proof}
For each $1\le j\le t$, we can write as $f'_j=P(f_1,\cdots,f_s)$ for some polynomial $P\in {\mathcal O}_{\frak{p}}[x_1,\dots,x_s]$. If we put $F_j':=P(F_1,\cdots,F_s)$, then $F'_j\in M_k(\Gamma _n)_{{\mathcal O}_{\frak{p}}}$ and $\Phi (F'_j)=f'_j$.
\end{proof}
\begin{Rem}
\label{Rem5}
(1) We have $S_n(K)\subset \{\frak{p}\;|\; \frak{p}\cap \mathbb{Z}\in S_n(\mathbb{Q}) \}$. Hence, to obtain the congruences as in Corollary \ref{ThmM}, it suffices to except the prime ideals above $p$ with $(p)\in S_n(\mathbb{Q})$. \\
(2) Let $g:=[K:\mathbb{Q}]<\infty $. Then $S_n(K)\supset \{\frak{p}\;|\; \frak{p}\cap \mathbb{Z}=(p)\in S_n(\mathbb{Q}),\ p\nmid g\}$.
\end{Rem}
\begin{proof}
(1) Let $\frak{p}\in S_n(K)$. If we assume that $\bigoplus_{k} M_k(\Gamma _{n})_{\mathbb{Z}_{(p)}}=\mathbb{Z}_{(p)}[f_1,\cdots,f_{s}]/C$, then $\bigoplus_{k} M_k(\Gamma _{n})_{{\mathcal O}_{\frak{p}}}={\mathcal O}_{\frak{p}}[f_1,\cdots,f_{s}]/C$ by Lemma \ref{Lem6}. Since $\frak{p}\in S_n(K)$, there exists $i$ with $1\le i \le s$ such that for all $F_i\in \Phi _K^{-1}(f_i)$, we have $v_{\frak{p}}(F_i)<0$. In particular, for all $F_i\in \Phi_{\mathbb{Q}}^{-1}(f_i)$, we have $v_{\frak{p}}(F_i)<0$. \\
(2) Let $\frak{p}\cap \mathbb{Z}=(p)\in S_n(\mathbb{Q})$, $p\nmid g$ and
$\bigoplus_{k} M_k(\Gamma
_{n})_{\mathbb{Z}_{(p)}}=\mathbb{Z}_{(p)}[f_1,\cdots,f_{s}]/C$. Seeking a
contradiction, we suppose that, for each $i$ with $1\le i\le s$, there exists $F_i\in \Phi_K^{-1}(f_i)$ such that
$v_{\frak{p}}(F_i)\ge 0$. We consider $G_i:=\sum _{\sigma
  \in {\rm Emb}(K,\mathbb{C})}F_i^{\sigma }\in M_k(\Gamma _n)_{\mathbb{Q}}$.
% (For the action of $\sigma $, see Remark \ref{Rem10}).
Note that $G_i\in M_{k}(\Gamma _n)_{\mathbb{Z}_{(p)}}$ because of $v_{\frak{p}}(F_i)\ge 0$ and that $\Phi (G_i)=gf_i$ since $\Phi (F_i^{\sigma })=f_i\in M_{k}(\Gamma _n)_{\mathbb{Q}}$ for any $\sigma \in {\rm Emb}(K,\mathbb{C})$. By the assumption $p\nmid g$, we have $v_{p}(g^{-1}G_i)\ge 0$ and $g^{-1}G_i\in \Phi _{\mathbb{Q}}^{-1}(f_i)$. This contradicts for $\frak{p}\cap \mathbb{Z}=(p)\in S_n(\mathbb{Q})$.
\end{proof}
We have $S_2(\mathbb{Q})\subset \{2,3\}$ by \cite{Ki-Na}. We shall consider $S_3(\mathbb{Q})$. Let $E^{(n)}_k\in M_k(\Gamma _n)_{\mathbb{Q}}$ be the normalized Siegel-Eisenstein series of weight $k$ and degree $n$. Let $X_k\in S_k(\Gamma _2)_{\mathbb{Z}}$ ($k=10$, $12$) be Igusa's cusp forms normalized as $a\left(\left(\begin{smallmatrix}1 & 1/2 \\ 1/2 & 1\end{smallmatrix}\right);X_k\right)=1$ in \cite{Igu}. Then $\bigoplus _{k\in 2\mathbb{Z}}M_k(\Gamma _2)_{\mathbb{Z}_{(p)}}=\mathbb{Z}_{(p)}[E_4^{(2)}, E_6^{(2)}, X_{10},X_{12}]$ holds for any prime $p\ge 5$ (cf. Nagaoka \cite{Na}). Note that $E_k^{(3)}\in \Phi ^{-1}_{\mathbb{Q}}(E_{k}^{(2)})$ for any even $k$. We can construct $F_{k}\in \Phi ^{-1}_{\mathbb{Q}}(X_{k})$ ($k=10$, $12$) by 
\begin{align}
\label{F_k}
\begin{split}
F_{10}:&=-\frac{43867}{2^{10}\cdot 3^5\cdot 5^2\cdot 7\cdot 53}(E^{(3)}_{10}-E^{(3)}_4E^{(3)}_6), \\
F_{12}:&=\frac{131\cdot 593}{2^{11}\cdot 3^6\cdot 5^3\cdot 7^2\cdot 337}
(3^2\cdot 7^2E_4^{(3)3}+2\cdot 5^3E_6^{(3)2}-691E^{(3)}_{12}).
\end{split}
\end{align}
Moreover, we know all possible primes which appear in the denominators of $E_k^{(3)}$ by B\"ocherer's results \cite{Bo}. Hence, it suffices to except all primes in the denominators of the constant factors in (\ref{F_k}) and all possible primes appearing the denominators of $E_k^{(3)}$ for $k=4$, $6$, $10$, $12$. In this way, we get 
\[S_3(\mathbb{Q})\subset \{2,3,5,7,53,131,337,593,43867\}\]
\begin{Prob}
For the general degree cases, give an explicit bound $C_n$ such that
\[\max {S_n}(\mathbb{Q}) <C_n.\]
\end{Prob}
%%%%%%%%%%%%%%%%%%%%%%%%%%%
\section{Numerical examples}
We give some numerical examples of Corollary \ref{ThmM} for the case of degree $2$. For simplicity, we put $E_k:=E_k^{(2)}$. Let $\Delta \in S_{12}(\Gamma _1)$ be Ramanujan's delta function. We write simply
$(m,r,n)$ for $\left( \begin{smallmatrix}n & \frac{r}{2} \\ \frac{r}{2} & m \end{smallmatrix} \right) \in \Lambda _2$. In the following construction of examples, we apply Sturm type theorem obtained by \cite{C-C-K}. In order to prove a congruence between two modular forms of even weight $k$ of degree $2$ by using the theorem in \cite{C-C-K}, it suffices to check the congruences for Fourier coefficients for
\begin{align*}
T=&(1,0,1),\ (1,1,1)\quad {\rm if}\ 10\le k \le 18,\\
T=&(1, 0, 1),\ (1, 0, 2),\ (1, 1, 1),\ (1, 1, 2),\ (2, 0, 2),\\
&(2, 1, 2),\ (2, 2, 2)\quad {\rm if}\ 20 \le k\le 28.
\end{align*}
The reason is that all Fourier coefficients corresponding to $(n, r, m)$, $(m, r, n)$, $(n, -r, m)$, $(m, -r, n)$ are the same in the case of even weight.

%%%%%%%%%%%%%%%%%%%%%%%%%%%%%%%%%%%%%%%%%%%%%%%%%%%
\subsubsection*{Weight 12}
We consider a Hecke eigen form $f_{12}:=7\Delta \in S_{12}(\Gamma _1)$. Then the Klingen-Eisenstein series $[f_{12}]_1^2$ is a mod $7$ cusp form. Hence, there exists a cusp form $F_{12}\in S_{12}(\Gamma _2)$ such that $[f_{12}]_1^2\equiv F_{12}$ mod $7$ by Corollary \ref{ThmM}. In fact, we can confirm this congruence as follows: We set $F_{12}:=X_{12}\in S_{12}(\Gamma _2)$. The following table is of the Fourier coefficients modulo $7$ of $[f_{12}]_1^2$ and $F_{12}$:
\begin{center}
\begin{tabular}{c|c|c|c}
$T=(m, r, n)$ & $a(T;[f_{12}]_{1}^{2})$ & $a(T;F_{12})$ & \text{modulo} $7$\\
\hline
 $(1, 0, 1)$ & $1242$ & $10$ & $3$ \\
 $(1, 1, 1)$ & $92$ & $1$ & $1$
\end{tabular}
\end{center}
Applying Sturm type theorem mentioned above, we have $[f_{12}]_1^2\equiv F_{12}$ mod $7$.

%%%%%%%%%%%%%%%%%%%%%%%%%%%%%%%%%%%%%%%%%%%%%%%%%%%%
\subsubsection*{Weight 16}
Let $a$ be a root of the polynomial $x^2 - x - 12837$ and put $K=\mathbb{Q}(a)$. Since $\dim S_{16}(\Gamma _1)=1$, we can find a unique cusp form $f_{16}\in S_{16}(\Gamma _1)$ such that $a(1; f_{16}) = 7^{2} \cdot 11$. If we put $\frak{p}=(7,a+4)$, then $[f_{16}]_1^2$ is a mod $\frak{p}^2$ cusp form. There exists a unique normalized Hecke eigen form $g_{30}\in S_{30}(\Gamma _1)$ such that the eigenvalue is $ -192a + 4416$ for the Hecke operator $T(2)$. Let $F_{16}\in S_{16}(\Gamma _2)$ be the Saito-Kuorokawa lift of $g_{30}$ normalized as the table below. Then we have $[f_{16}]_1^2\equiv F_{16}$ mod $\frak{p}^2$. In fact, their Fourier coefficients are given in the following table:
\begin{center}
\begin{tabular}{c|c|c|c}
$T = (m, r, n)$ & $a(T; [f_{16}]_{1}^{2})$ & $a(T;F_{16})$ & \text{modulo} $\frak{p}^{2}$\\
\hline
$(1, 0, 1)$ & $5394$ & $80a + 3600$ & $4$ \\
$(1, 1, 1)$ & $124$ & $8a + 1248$ & $26$
\end{tabular}
\end{center}
Applying Sturm type theorem repeatedly, we have $[f_{16}]_1^2\equiv F_{16}$ mod $\frak{p}^2$.

%%%%%%%%%%%%%%%%%%%%%%%%%%%%%%%%%%%%%%%%%%%%%%%%%%%%
\subsubsection*{Weight 20}
In this case also $\dim S_{20}(\Gamma _1)=1$. Thus there exists a unique cusp form $f_{20}\in S_{20}(\Gamma _1)$ such that $a(1; f_{20}) = 11\cdot 71^2$. Then $[f_{20}]_1^2$ is a mod $71^2$ cusp form. Let $F_{20}\in S_{20}(\Gamma _2)$ be the unique Hecke eigen form such that $F_{20}$ is not Saito-Kurokawa lift. Explicitly, we can write as
\[F_{20} = 38 (E_{4}E_{6}X_{10} + E_{4}^2X_{12} - 1785600X_{10}^2).\]
Then we have $[f_{20}]_1^2\equiv F_{20}$ mod $71^2$. In fact, we can confirm this by the following table and an application of Sturm type theorem:
\begin{center}
\begin{tabular}{c|c|c|c}
$T = (m, r, n)$ & $a(T; [f_{20}]_{1}^{2})$ & $a(T;F_{20})$ & \text{modulo} $71^{2}$\\
\hline
$(1, 0, 1)$ & $10386$ & $304$ & $304$ \\
$(1, 0, 2)$ & $1925356716$ & $198816$ & $2217$ \\
$(1, 1, 1)$ & $76$ & $76$ & $76$ \\
$(1, 1, 2)$ & $162929376$ & $4256$ & $4256$ \\
$(2, 0, 2)$ & $1238800286736$ & $-335343616$ & $3868$ \\
$(2, 1, 2)$ & $385264596000$ & $278989920$ & $816$ \\
$(2, 2, 2)$ & $9084897120$ & $-63912960$ & $1879$
\end{tabular}
\end{center}
%%%%%%%%%%%%%%%%%%%%%%%%%%%%%%%%%%%%%%%%%%%%%%%%%%%%
\subsubsection*{Weight 22}
Since $\dim S_{22}(\Gamma _1)=1$, there exists a unique cusp form $f_{22}\in S_{22}(\Gamma _1)$ such that $a(1;f_{22}) = 7 \cdot 13 \cdot 17 \cdot 61 \cdot 103$. Then $[f_{22}]_1^2$ is a mod $61$ cusp form. Let $F_{22}\in S_{22}(\Gamma _2)$ be the unique Hecke eigen form such that $F_{22}$ is not Saito-Kurokawa lift. Explicitly, we can write as
\[F_{22} = 2\cdot 3^{-2}(-61E_{4}^3X_{10} - 5E_{6}^2X_{10} + 30E_{4}E_{6}X_{12} + 80870400X_{10}X_{12}).\]
Then we have $[f_{22}]_1^2\equiv F_{22}$ mod $61$. In fact, we can confirm this by the following table and Sturm type theorem:
\begin{center}
\begin{tabular}{c|c|c|c}
$T = (m, r, n)$ & $a(T; [f_{22}]_{1}^{2})$ & $a(T;F_{22})$ & \text{modulo} $61$ \\
\hline
$(1, 0, 1)$ & $-179610$ & $96$ & $35$ \\
$(1, 0, 2)$ & $-133169475780$ & $-1728$ & $41$ \\
$(1, 1, 1)$ & $-740$ & $-8$ & $53$ \\
$(1, 1, 2)$ & $-8620265280$ & $-10752$ & $45$ \\
$(2, 0, 2)$ & $54428790246720$ & $-313368576$ & $14$ \\
$(2, 1, 2)$ & $15093047985984$ & $142287360$ & $41$ \\
$(2, 2, 2)$ & $223472730240$ & $17725440$ & $60$
\end{tabular}
\end{center}

%%%%%%%%%%%%%%%%%%%%%%%%%%%%
\section*{Acknowledgment}
The authors would like to thank Professor S.~Nagaoka and Professor S. B\"ocherer for the valuable discussions about the proofs. The authors would also like to thank Professor H. Katsurada for his informing them about the value of congruences on Fourier coefficients between Klingen-Eisenstein series and cusp forms.

%%%%%%%%%%%%%%%%%%%%%%%%%%%%%%%%%%%

\begin{flushleft}
Toshiyuki Kikuta\\
College of Science and Engineering\\
Ritsumeikan University\\
1-1-1 Noji-higashi, Kusatsu\\
Shiga 525-8577, Japan\\
E-mail: kikuta84@gmail.com\\

\vspace{0.5cm}

%----------Author 2
\noindent
Sho Takemori\\
Department of Mathematics,\\
Kyoto University\\
Kitashirakawa-Oiwake-Cho, Sakyo-Ku, \\
Kyoto, 606-8502, Japan\\
E-mail: takemori@math.kyoto-u.ac.jp
\end{flushleft}
\end{document}